\newtheorem{thm}{Theorem}[section]  
\newtheorem{cor}[thm]{Corollary}
\newtheorem{lem}[thm]{Lemma}
\newtheorem{prop}[thm]{Proposition} 
\newtheorem{df-pr}[thm]{Definition-Proposition}
\theoremstyle{definition}
\newtheorem{defn}[thm]{Definition} 
\newtheorem{rem}[thm]{Remark}
\newtheorem{exm}[thm]{Example}
\newcommand{\NN}{{\mathbb N}}
\newcommand{\CC}{{\mathbb C}}
\newcommand{\ZZ}{{\mathbb Z}}
\newcommand{\PP}{{\mathbb P}}
\newcommand{\LL}{{\mathbb L}}
\newcommand{\sfn }{{\mathsf n}}
\newcommand{\bfk}{{\mathbf k }}
\newcommand{\bfx}{{\mathbf x }}
\newcommand{\bfm}{{\mathbf m }}
\newcommand{\bfs}{{\mathbf s}}
\newcommand{\calL}{{\mathcal L}}
\newcommand{\calP}{{\mathcal P}}
\newcommand{\calQ}{{\mathcal Q}}
\newcommand{\calR}{{\mathcal R}}
\newcommand{\scP}{{\mathscr P}}
\newcommand{\scS}{{\mathscr S}}
\newcommand{\surj}{\twoheadrightarrow}
\newcommand{\Fl}{\operatorname{Fl}}
\newcommand{\Ker}{\operatorname{Ker}}
\newcommand{\Gr}{\operatorname{Gr}}
\newcommand{\gr}{\operatorname{gr}}
\newcommand{\Spec}{\operatorname{Spec}}
\newcommand{\rk}{{\operatorname{rk}}}
\newcommand{\id}{{\operatorname{id}}}
\newcommand{\supp}{{\operatorname{supp}}}
\newcommand{\SM}{\mathbf{Sm}_{\bfk}}
\newcommand{\CK}{{\it CK}}
\newcommand{\CH}{{\it CH}}
\newcommand{\srarrow}{\twoheadrightarrow}
\newcommand{\irarrow}{\hookrightarrow}
\newcommand{\Laz}{\mathbb{L}}
\newcommand{\trecd}{\cdot\cdot\cdot}
\newcommand{\tred}{\ldots}
\newsavebox{\savepar}
\numberwithin{equation}{section}
\newcounter{labelflag} \setcounter{labelflag}{0}
\newcommand{\labelon}{\setcounter{labelflag}{1}}
\newcommand{\Label}[1]{\ifnum\thelabelflag=1\ifmmode
\makebox[0in][l]{\qquad\fbox{\rm#1}} \else
\marginpar{\vspace{0.7\baselineskip} \hspace{-1.1\textwidth}
\fbox{\rm#1}} \fi \fi \label{#1} } \labelon
\begin{document} 
\title{Segre classes and Damon--Kempf--Laksov formula in algebraic cobordism}
\author{Thomas Hudson, Tomoo Matsumura}
\date{}
\maketitle 
\begin{abstract}
In this paper, we introduce (relative) Segre classes for algebraic cobordism and prove a formula for their generating function. As an application, we prove a generalisation of the determinantal formula for the fundamental class of degeneracy loci to the algebraic cobordism of Grassmann bundles.
\end{abstract}

\section{Introduction}

 Given a sufficiently general morphism of vector bundles $\varphi:V\rightarrow W$ over a smooth quasi-projective variety $X$, the Giambelli--Thom--Porteous formula describes the fundamental class of the degeneracy locus $D_r(V,W)$, the variety consisting of the points over which $\text\,\varphi$ has dimension at most $r$, as a determinant in the Chern classes of the given bundles. This formula was then extended by Damon (\cite{Damon1973}), Kempf--Laksov  (\cite{KempfLaksov}) and, later, by Fulton (\cite{FlagsFulton}), who considered the following more general setting:
one replaces $V$ and $W$ with flags $V_{p_1}\subseteq\cdots \subseteq V_{p_d}=F$ and $W=W_{q_d}\srarrow\cdots \srarrow W_{q_1}$ (here the subscripts represent the ranks of the bundles) and the degeneracy locus $D_{r_\bullet}$ is then given by the intersection of the loci $D_{r_i}(V_{p_i},W_{q_i})$, where the $d$-tuple $r_\bullet=(r_1,\cdots,r_d)$ is required to satisfy some conditions. Fulton was able to express the fundamental class $[D_{r_\bullet}]$ as a Schur determinant in the Chern classes of the bundles $V_{p_{i+1}}/V_{p_i}$ and $\Ker(W_{q_{i+1}}\srarrow W_{q_i})$. By considering the case in which one has $p_i-r_i=i$ and all the $W_{q_i}$'s coincide, one recovers the Damon--Kempf--Laksov formula.

In exactly the same way in which Giambelli's original result can be translated into a description through Schur functions of the Schubert classes of the Grassmannian of $d$-dimensional planes $\Gr_d(\CC^e)$ via a closed, determinantal formula in the Chern classes of the tautological bundle $U$, the Damon--Kepf--Laksov formula can be reinterpreted as a description of the fundamental classes of the Schubert varieties of Grassmann bundles.
 More precisely, we consider a vector bundle $E\rightarrow X$ of rank $e$ and we fix a reference flag of subbundles $0=F^e\subset \trecd\subset F^1\subset F^0=E$, setting $F_{\ell}:=E/F^{\ell}$. Then to a partition $\lambda=(\lambda_1,\tred,\lambda_r)$ with $r\leq d$ and $\lambda_1\leq e-d$ we associate, inside the Grassmann bundle $\Gr_d(E)$, the Schubert variety $X_\lambda$ obtained by selecting the pairs $(x,U_x)\in \Gr_d(E)$ for which $\text{dim}\,(F^{\lambda_i-i+d}\cap U_x)\geq i$ for all $i$. As an element of $\CH^*(\Gr_d(E))$ its fundamental class is given by the Schur determinant
\begin{align}\label{eqn Schur}
[X_\lambda]_{\CH} = \Delta_{\lambda}\Big(c(1),\dots, c(r)\Big) := \det\Big(c(i)_{\lambda_i +j-i}\Big)_{1\leq i,j\leq r},
\end{align}
where $c(i)$ denotes the total Chern class $c(F_{\lambda_i-i+d}-U)=c(F_{\lambda_i-i+d})/c(U)$.

The goal of this paper is to generalise this expression to algebraic cobordism and to develop the tools necessary for its proof. Algebraic cobordism, denoted $\Omega^*$, was introduced by Levine and Morel in \cite{LevineMorel} and represents the universal object among oriented cohomology theories, a family of functors which includes both the Chow ring $\CH^*$ and $K^0[\beta,\beta^{-1}]$, a graded version of the Grothendieck ring of vector bundles.
By definition, an oriented cohomology theory $A^*$ is equipped with pullbacks, pushforwards for projective morphisms and a theory of Chern classes for vector bundles $c^A$. Hence one might hope to be able to extend (1.1), provided one can understand the role played in it by the formal group law $F_A(u,v)$ and its formal inverse $\chi_A(u)$. These are power series over $A^*(\Spec (\bfk))$ which respectively describe the behaviour of $c_1^A$ on line bundles with respect to tensor product and taking the dual.  


A careful inspection of the original proofs, in which $[X_\lambda]_{CH}$ is computed by pushing forward to $Gr_d(E)$ the fundamental class of a resolution of singularities of $X_\lambda$ denoted 
$$\psi:Y_\lambda\rightarrow X_\lambda\irarrow Gr_d(E),$$
 convinced us that it is more natural to express (\ref{eqn Schur}) in terms of the Segre classes $s(\text{-})$. Given that for any bundle $V$ one has $c(V)=s(-V^\vee)$, this alternative formulation reads
\begin{align*}
[X_\lambda]_{\CH} =\psi_*[Y_\lambda]_{CH}= \Delta_{\lambda}\Big(s(1),\dots, s(r)\Big)
\end{align*}      
where $s(i)$ represents the total Segre class $s\big((U-F_{\lambda_i-i+d})^\vee\big)$.

In this format the formula does generalise, provided that one introduces an appropriate notion of Segre classes, denoted $\scS^A$, and a power series $P_A(z,x)$, defined as the unique solution to the equation $F_A(z,\chi_A(x))=(z-x)P_A(z,x)$. More specifically we obtain the following result.     

\vspace{0.4 cm}

\noindent{\bf Theorem A }
({\it cf.} Theorem \ref{detthm}).
For any partition $\lambda \in \calP_d(n)$ of length $r$,  we have
\begin{align}\label{eqn THMA}
[Y_\lambda \to \Gr_d(E)]_A :=\psi_*[Y_\lambda]_A= \sum_{\bfs \in\NN^r}a_{\bfs} \Delta_{\lambda+\bfs}\Big(\scS^A(1), \dots, \scS^A(r)\Big), 
\end{align}
where $\scS^A(i)=\scS^A\big((U-F_{\lambda_i-i+d})^\vee\big)$ and the coefficients $a_{\bfs}\in A^*(\Spec (\bfk))$ are given by
\[
\prod_{1\leq i<j\leq r}P_A(t_j,t_i)=\sum_{\bfs=(s_1,\dots, s_r) \in\NN^r}a_{\bfs}\cdot t_1^{s_1}\cdots t_r^{s_r}.
\]

\vspace{0.2 cm}

In the special case $A^*=CH^*$ the two notions of Segre classes actually coincide and moreover $P_{CH}(z,x)=1$, therefore our expression recovers the classical statement since the only non zero coefficient is $a_{(0,\tred,0)}$. For a less trivial application involving formal group laws given by polynomials, we refer the reader to \cite{HudsonMatsumuraInf} in which we described more explicitly the case of infinitesimal cohomology theories.

 
 In order to better appreciate the significance of our formula, it may be worth placing it within the wider framework of generalised Schubert calculus. In recent years a lot of effort has been devoted to lift results of classical Schubert calculus to $\Omega^*$, in a fashion similar to what Bressler--Evens did in \cite{BraidBressler,SchubertBressler} for topological cobordism. In particular, this line of research was pioneered by Calm\'{e}s--Petrov--Zanoulline (\cite{SchubertCalmes}) and Hornbostel--Kiritchenko (\cite{SchubertHornbostel}) who studied the algebraic cobordism of flag manifolds. 
  Later, the attention shifted to flag bundles with contributions given by Kiritchenko--Krishna (\cite{EquivariantKiritchenko}), Calmes--Zainoulline--Zhong (\cite{EquivariantCalmes}) and the first author (\cite{ThomHudson,GeneralisedHudson}).

It should be noticed, however, that the interpretation of such results requires a little caution. On the one hand this is due to the fact that not all Schubert varieties have a well defined notion of fundamental class, only those that are local complete intersection schemes. On the other hand, all classical techniques inherently depend on the choice of a resolution, which in general is not unique. This dependence can be also observed when one considers the  polynomials describing the classes: so far for $\Omega^*$ it has not been possible to identify stable representatives, \textit{i.e.} which are independent of $e$. From this perspective $[Y_\lambda \to \Gr_d(E)]_\Omega$, which we refer to as the \emph{Damon--Kempf--Laksov class}, has the advantage of being stable along the natural maps of the infinite system of Grassmann bundles obtained by increasing the size of the vector bundle $E$. As a consequence it becomes possible to define a generalisation of Schur/Grothendieck polynomials in the context of algebraic cobordism, an aspect of the theory that we intend to develop in our future works.




Within this framework, the decision to restrict our attention to the Schubert varieties $X_\lambda$ of the Grassmann bundles is justified by the need to have at our disposal the associated Damon--Kempf--Laksov resolution $Y_\lambda$ (see Definition \ref{defKL}). Such an approach was also at the core of  \cite{HIMN} (joint with T. Ikeda and H. Naruse) in which we managed to establish a new determinantal formula for the Schubert classes $[X_\lambda]_{\CK}$ of connective $K$-theory, an oriented cohomology theory obtained from $\Omega^*$ which can be specialised to both $\CH^*$ and $K^0$. In fact, since $X_{\lambda}$ has at worst rational singularities, we were able to conclude that $[Y_\lambda \to \Gr_d(E)]_{\CK}$ actually coincides with the fundamental class of $X_{\lambda}$ in $\Gr_d(E)$. (As an aside let us mention that, although (\ref{eqn THMA}) does not turn directly into the original expression we found, it still produces a single determinant, see Corollary~\ref{exmCK}.)
The key point of the proof given in \cite{HIMN} is to combine the geometric input given by the resolution with an algorithmic procedure modelled after the one used in \cite{Kazarian} by Kazarian to describe the Schubert classes of maximal isotropic Grassmann bundles of symplectic and orthogonal type.  For a detailed historical account and a more in depth description, we refer the interested reader to  \cite{HIMN},  let us simply mention that the key insight of Kazarian is that it is possible to reduce complex manipulations of Chern and Segre classes to computations with Laurent series.

In order to be able to make use of Kazarian's machinery in the more general setting, we had to introduce a notion of Segre classes for oriented cohomology theories and find a way to describe them as explicitly as possible. The definition we use is the exact analogue of the one given by Fulton in \cite{FultonIntersection}. For an oriented cohomology theory $A^*$ and a bundle $V\rightarrow X$ of rank $n$ we set $\scS^A_i(V):= \pi_*\Big(c_1^A\big(O(1)\big)^{i+n-1}\Big)$ where $\PP^*(V)\stackrel{\pi}\longrightarrow X$ is the associated dual projective bundle with tautological quotient bundle $O(1)$.  We then manage to relate the Segre polynomial $\scS^A(V;u)$ to the  Chern polynomial $c^A(V;u)$ by the formula
\begin{equation}\label{eqnSegre}
\scS^A(V;u)c^A(V;-u)=\frac{\scP^A(u)}{w^A(V;u)},
\end{equation}
where $\scP^A(u):=\sum_{i}[\PP^i]_A\cdot u^{-i}$ and $w^A(V;u)$ is a series constructed by making use of $P_A$ (see Definition \ref{defwtilde1}). It should be noticed that the right hand side is a power series in $u^{-1}$ and that (\ref{eqnSegre}) generalises the classical Chow ring identity $s(V;u)c(V;-u)=1$, which allows one to interpret Segre classes as the complete symmetric functions in the Chern roots. 

 An important consequence of 
our description is that it becomes possible to lift the definition of Segre classes to the Grothendieck ring of vector bundles, so that they can be evaluated on virtual bundles. The following result provides a geometric interpretation to such extension. 
\vspace{0.4 cm}

\noindent{\bf Theorem B }({\it cf.} Theorem \ref{relSegrePush}).
Let $V$ and $W$ be two vector bundles over $X$, respectively of rank $n$ and $m$. Consider the dual projective bundle $\PP^*(V)\stackrel{\pi}\longrightarrow X$ with tautological bundle $O(1)$. Then, for every oriented cohomology theory $A^*$ one has
\[
\scS^A_{m-n+1}(V-W)=\pi_*\Big(c_m^A\big(O(1)\otimes W^\vee\big)\Big)
\]
as elements of $A^*(X)$.

 After our work was completed, we were informed by Nakagawa--Naruse that in \cite{NakagawaNaruse} they achieved, by considering a different resolution, a stable generalisation of the Hall--Littlewood type formulas for Schur polynomials in the context of topological cobordism ({\it cf.} \cite{NakagawaNaruse1}). Finally, it is known from the work of Lascoux--Schutzenberger \cite{LascouxSchutzenberger} and Fulton \cite{FlagsFulton} that in cohomology one can express the degeneracy loci classes associated to \emph{vexillary} permutations as determinants ({\it cf.} Anderson--Fulton \cite{AndersonFulton2}). We expect that such result can be lifted to algebraic cobordism by using our method. 
\vspace{0.1 cm}
\textit{Notations and conventions:}
In this paper $\bfk$ stands for a field of characteristic 0 and $\SM$ is the category of smooth separated schemes of finite type which are quasi-projective over $\Spec (\bfk)$. Finally, we will follow the convention according to which 0 belongs to the natural numbers $\NN$. 

\section{Preliminaries on algebraic cobordism}
An oriented cohomology theory consists of a contravariant functor $A^*:\SM\rightarrow \calR^*$, together with a family of pushforward maps $\{f_*:A^*(X)\rightarrow A^*(Y)\}$, one for each projective morphism $f:X\rightarrow Y$. We will not describe in detail the compatibilities and the properties that this data is required to satisfy, the interested reader can find the precise definition  in \cite[Definition 1.1.2]{LevineMorel}. Instead, we will illustrate the aspects in which a general oriented cohomology theory differs from the Chow ring, the simplest and perhaps best known example, which the reader should always bear in mind as a first approximation. 

Since all oriented cohomology theories satisfy the projective bundle formula, each of them allows a theory of Chern classes which, in most respects, mirrors the one for $CH^*$: to every vector bundle $V\rightarrow X$ it is possible to associate a Chern polynomial $c^A(V;u)\in A^*(X)[u]$. Such assignment respects the Whitney formula, so that it can be extended to the Grothendieck group of vector bundles $K^0(X)$. To a class $[V]-[W]$ one associates 
\begin{align} \label{eq virtual}
c^A(V-W;u)=\frac{c^A(V;u)}{c^A(W;u)}\quad\text{ or, equivalently, }\quad c^A_i(V-W)=\sum_{j=0}^i(-1)^jc^A_{i-j}(V)  h^A_j(W),
\end{align}
 where $h^A_j(W)$ stands for the $j$-th complete symmetric function in the Chern roots of $W$.

 A close examination of the behaviour of the first Chern classes of line bundles unveils a key aspect in which $CH^*$ proves to be too limited to adequately represent all theories. While it is well known that $c_1^{CH}$ is linear with respect to tensor product, this is no longer true in general: describing $c_1^A(L\otimes M)$ in terms of the classes of the factors requires the use of a formal group law $(A^*(\Spec \bfk),F_A)$. This is a power series $F_A$ defined over the coefficient ring $A^*(\Spec (\bfk))$ such that, for any choice of line bundles $L$ and $M$ over some scheme $X$, one has 
$$c_1^A(L\otimes M)=F_A(c_1^A(L),c_1^A(M)).$$
 In a similar fashion, the usual equation $c_1^{CH}(L^\vee)=-c^{CH}_1(L)$ becomes $c_1^A(L^\vee)=\chi_A\big(c_1^A(L)\big)$, where $\chi_A\in A^*(\Spec (\bfk))[[u]]$  is the so-called formal inverse, the unique power series such that
$$F_A\big(u, \chi_A(u)\big)=0.$$

The main achievement of Levine and Morel concerning oriented cohomology theories is the construction of algebraic cobordism, denoted $\Omega^*$, which they identify as universal in the following sense.
\begin{thm}[(\protect{\cite[Theorems 1.2.6 and 1.2.7]{LevineMorel}})]
$\Omega^*$ is universal among oriented cohomology theories on $\SM$. That is, for any other oriented cohomology theory $A^*$ there exists a unique morphism 
$$\vartheta_A:\Omega^*\rightarrow A^*$$
of oriented cohomology theories. Furthermore, its associated formal group law $(\Omega^*(\Spec(\bfk)),F_\Omega)$ is isomorphic to the universal one defined on the Lazard ring $(\Laz,F)$.
\end{thm}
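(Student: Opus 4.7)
The plan is to split the theorem into two parts: (i) constructing the unique morphism $\vartheta_A:\Omega^*\to A^*$, and (ii) identifying $F_\Omega$ with the universal formal group law $F$ over $\Laz$. Both parts rely on Levine--Morel's explicit presentation of $\Omega^*(X)$ as a quotient of the free abelian group on isomorphism classes of projective morphisms $[f:Y\to X]$ with $Y\in\SM$ quasi-projective, modulo a family of relations designed precisely so that $\Omega^*$ becomes an oriented cohomology theory.

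For part (i), the natural candidate sends a generator $[f:Y\to X]$ to $f_*(1_Y)\in A^*(X)$, where $f_*$ and $1_Y$ are the pushforward and unit class supplied by $A^*$. To show that this candidate descends to a well-defined morphism of oriented cohomology theories, I would verify that each family of defining relations of $\Omega^*$ is respected. These fall into three groups: the dimension-type relations (implied by the extended homotopy property and the projective bundle formula of $A^*$); the section-type relations (which follow from the behavior of the first Chern class of a line bundle with respect to transverse pullback to a zero section); and the formal group law relation $c_1^A(L\otimes M)=F_A(c_1^A(L),c_1^A(M))$, which $A^*$ satisfies by assumption. Compatibility with smooth pullbacks and projective pushforwards is automatic from the construction. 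Uniqueness is immediate: any morphism of oriented cohomology theories must send $1_Y$ to $1_Y$ and commute with $f_*$, so its value on the generator $[f:Y\to X]=f_*(1_Y)$ is forced.

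For part (ii), the universality of the Lazard ring yields a canonical ring homomorphism $\Phi:\Laz\to\Omega^*(\spec k)$ classifying $F_\Omega$. Surjectivity of $\Phi$ can be obtained by showing that $\Omega^*(\spec k)$ is generated, as an abelian group, by classes of smooth projective varieties and that these admit expressions in terms of Chern numbers that visibly lie in the image of $\Phi$. The main obstacle, and the technical heart of \cite{LevineMorel}, is injectivity: bounding $\Omega^*(\spec k)$ from above so that it cannot exceed $\Laz$. Here the characteristic zero hypothesis is crucial: resolution of singularities (Hironaka) and weak factorization reduce any cobordism relation between smooth projective varieties to a sequence of blow-ups along smooth centers, and Levine--Morel's generalized degree formula, combined with a delicate analysis of how such blow-ups interact with the formal group law relation, forces $\Omega^*(\spec k)$ to be no larger than the Lazard ring. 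Together with part (i) applied to the oriented cohomology theory built from $\Laz$ via the universal formal group law, this yields the desired isomorphism of formal group laws.
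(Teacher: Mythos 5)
This theorem is not proved in the paper at all: it is quoted verbatim from Levine--Morel (Theorems 1.2.6 and 1.2.7 of \cite{LevineMorel}) as a foundational input, so there is no proof of the authors' to compare yours against. Judged on its own terms, your outline does track the actual Levine--Morel strategy at the level of headlines --- $\Omega^*$ presented by generators $[f:Y\to X]$ and relations, $\vartheta_A$ defined on generators by $[f:Y\to X]\mapsto f_*(1_Y)$ with uniqueness forced by compatibility with pushforwards, and a separate comparison of $\Omega^*(\spec k)$ with $\Laz$ --- but it is an outline, not a proof: every substantive verification (that the three families of relations are killed in $A^*$, that the candidate map is multiplicative and compatible with all pullbacks, and the entire coefficient-ring computation) is deferred with phrases like ``I would verify'' and ``a delicate analysis.'' For a result of this depth that is acceptable only as a reading guide to \cite{LevineMorel}, not as a self-contained argument.

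There is also a concrete mix-up in part (ii). The direction that uses resolution of singularities and the generalized degree formula is the \emph{surjectivity} of $\Phi:\Laz\to\Omega^*(\spec k)$: the degree formula shows by induction on dimension that $\Omega_*(\spec k)$ is generated as an $\Laz$-module by $1$, i.e.\ that the coefficient ring is no larger than $\Laz$. \emph{Injectivity} --- that the coefficient ring is no smaller than $\Laz$ --- is obtained by producing enough detecting invariants, e.g.\ by comparison with complex cobordism $MU^{2*}(\mathrm{pt})\cong\Laz$ (Quillen) over $k\subseteq\CC$ together with a Lefschetz-principle reduction, or equivalently by Chern-number constructions. You have assigned the degree-formula/resolution argument to injectivity and the Chern-number argument to surjectivity, which reverses the logic; weak factorization, which you invoke, is not needed in Levine--Morel's argument. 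If you intend to sketch this theorem rather than cite it, that attribution should be corrected.
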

One of the consequences of the universality is that it allows to translate formulas which hold in $\Omega^*$ to every other oriented cohomology theory $A^*$, by making use of $\vartheta_A$. In particular, if the given formula has a classical version in either $\CH^*$ or $K^0$, then one is supposed to recover it. On the other hand, it is not always the case that properties that hold for the Chow ring or the Grothendieck ring will lift to algebraic cobordism.

 For example, one basic instance of this pheonomenon can be observed if one tries to compute the fundamental class of some closed subscheme $Z\stackrel{i_Z}\irarrow X$ in a smooth ambient space. While for the Chow ring it is sufficient to consider any resolution of singularities $\widetilde{Z}\stackrel{\varphi_{\widetilde{Z}}}\longrightarrow X$ to recover $[Z]_{\CH}$ as $\varphi_{\widetilde{Z}*}[\widetilde{Z}]_{\CH}$, for $K^0$ one is able to conclude that $[\mathcal{O}_Z]_{K^0}=\varphi_{\widetilde{Z}*}[\mathcal{O}_{\widetilde{Z}}]_{K^0}$ only if $Z$ has at worst rational singularities. Even this weaker statement proves to be false for algebraic cobordism, since different desingularisations can yield different push-forward classes. 

   On top of this lies an even bigger problem. As mentioned in the introduction, in $\Omega^*$ a scheme $Z\stackrel{\pi_Z}\longrightarrow \Spec(\bfk) $ has a well defined notion of fundamental class only if it is an l.c.i scheme. In fact, since l.c.i. pullbacks are available, one can make use of $\Omega_*$, the homological counterpart of algebraic cobordism which is defined for all quasi-projective schemes. Namely we can set $[Z]_{\Omega_*}:=\pi_Z^*(1)$,  where 1 is viewed as the multiplicative unit of the coefficient ring $\LL$. Then, as an element of $\Omega^*(X)$, the fundamental class of $[Z]_{\Omega^*}$ is given by $i_{Z*}([Z]_{\Omega_*})$, which as a cobordism cycle can be rewritten as $[Z\stackrel{i_Z}\irarrow X]$. It is  worth noting that, since $id_{X*}=id_{\Omega^*(X)}$, for $Z=X$ one recovers the original definition for smooth schemes $1_X:=[X\stackrel{id_X}\longrightarrow X]$.

Let us finish this section by warning the reader that we will follow the common practice of writing $[X]_\Omega$ instead of the more precise notation $[X\stackrel{\pi_X}\longrightarrow \Spec(\bfk)]$ when dealing with the elements of the coefficient ring $\Omega^*(\Spec(\bfk))$. More generally, the subscript $\Omega$ will from now on be omitted and, unless stated otherwise, all classes are to be thought of as cobordism classes. Finally, we will consider the Lazard ring $\LL$ as a graded ring  in view of the isomorphism with $\Omega^*(\Spec(\bfk))$. \emph{For the rest of the paper, we will work with algebraic cobordism $\Omega^*$ and $F(u,v)\in \LL[[u,v]]$ will stand for the universal formal group law.}

\section{Segre classes and relative Segre classes}\label{secSegre}
In this section we first introduce Segre classes for algebraic cobordism and compute their generating function (Theorem \ref{thmSegre}). Then we use such description to define relative Segre classes, which we later describe in Theorem \ref{relSegrePush} as pushforwards of Chern classes along a projective bundle. This will be the main ingredient for the computation of the Damon--Kempf--Laksov classes in Section \ref{secKLdet}.
\subsection{Definition of $w(E;u)$}\label{secw}
In order to describe the generating function of Segre classes, we introduce $w_{-s}(E)$, whose definition is based on the following elementary observation.
\begin{lem}\label{lemP}
There exists a unique power series $P(z,x)\in \Laz[[z,x]]$ of degree $0$ and constant term~$1$ satisfying
\[
F(z,\chi(x)) = (z-x) P(z,x).
\]
\end{lem}
\begin{proof}
Let us write $F(z,\chi(x))=\sum_{j=0}^{\infty} Q_j(z,x)$ where each $Q_j(z,x)$ is a homogeneous polynomial of total degree $j$ in $z$ and $x$. Each $Q_j(z,x)$ becomes $0$ if one sets $z=x$, thus it is divisible by $(z-x)$. Therefore the claim holds.   
\end{proof}
\begin{defn}\label{defwtilde1}
Let $\bfx=\{x_1,\dots, x_n\}$ be a set of formal variables. For each integer $s \geq 0$, we define $w_{-s}(\bfx) \in \LL[[\bfx]]$ 
by
\[
\prod_{q=1}^n P(z, x_{q}) =\sum_{s= 0}^{\infty} {w}_{-s}(\bfx) z^{s}
\]
and let $w(\bfx;u):=\sum_{s\geq 0}^{\infty} {w}_{-s}(\bfx)u^{-s}$. If $x_1,\dots, x_e$ are interpreted as the Chern roots of a vector bundle $V$, then we can define $w(V;u):=w(\bfx;u)$ and $w_{-s}(V):={w}_{-s}(\bfx)$. 
\end{defn}
Since $w_0(\bfx)$ has constant term $1$, it is invertible in $\LL[[\bfx]]$. Moreover, an easy computation yields
\begin{equation}\label{defwtilde2}
c_n(L\otimes V^{\vee}) = \prod_{q=1}^nF(z, \chi(x_{q}))  = \sum_{p=0}^n(-1)^pc_p(V) z^{n-p} w(V;z^{-1}).
\end{equation}
\subsection{Segre classes}
\begin{defn}\label{defSegre}
Let $V$ be a vector bundle of rank $n$ over $X$. For each $k \in \ZZ$, consider the dual projective bundle $\pi_m: \PP^*(V\oplus O_X^{\oplus m}) \to X$ for some $m\geq \max\{0, -k-n+1\}$ where $O_X$ is the trivial line bundle over $X$. We then define the degree $k$ \emph{Segre class} $\scS_k(V)$ of $V$ by 
\[
\scS^{(m)}_k(V) = \pi_{m*}(\tau^{k+n+m-1}), 
\]
where $\tau$ is the first Chern class of the tautological quotient line bundle $\calQ$ of $\PP^*(V\oplus O_X^{\oplus m})$. 
\end{defn}
\begin{rem}
 It is easy to see that the definition of $\scS_k(V)$ is actually independent of $m$. In fact, for $m'>m$ one has a canonical inclusion $\iota_m^{m'}:\PP^*(V\oplus O_X^{\oplus m})\irarrow \PP^*(V\oplus O_X^{\oplus m'})$, whose associated pushforward map $(\iota_m^{m'})_*$ is just multiplication by $\tau^{m'-m}$. Then, since $\pi_m=\pi_{m'}\circ \iota_m^{m'}$,  one has
$$\scS_k^{(m)}(V)=\pi_{m*}(\tau^{k+n+m-1})=\pi_{m'*}\big((\iota_m^{m'})_*(\tau^{k+n+m-1})\big)=\pi_{m'*}(\tau^{k+n+m'-1})=\scS_k^{(m')(V)}$$
and it is therefore possible to remove the superscript $(m)$ from the notation. 

\end{rem}
\begin{rem}
If $V$ is a line bundle and $m=0$, we have $\calQ=V$ and $\pi=\id_X$, {\it i.e.} $\scS_k(V)=c_1(V)^k$ for all $k\geq 0$.
\end{rem}
\begin{thm}\label{thmSegre}
Let $V$ be a vector bundle of rank $n$ over $X\in \SM$ and $\scS(V;u) = \sum_{k\in \ZZ} \scS_k(V)u^k$. Then we have
\[
\scS(V;u) =\frac{\scP(u)}{c(V;-u)w(V;u)},
\]
where we set
\[
\scP(u):=\sum_{i=0}^{\infty} [\PP^i]u^{-i}.
\]
where $[\PP^i]\in \Laz^{-i}$ is the class of the projective space $\PP^i$.
\end{thm}
\begin{proof}
We will prove
\[
\scP(u)=c(V;-u)w(V;u)\scS(V;u).
\]
Let us begin by proving the equalities in degree $-m$, for $m\in\NN$. For this we consider the vector bundle $V\oplus O_X^{m+1}\rightarrow X$ and its projectivization $\pi_{m+1}: \PP^*(V\oplus O_X^{m+1}) \to X$. Let $\calQ \to \PP^*(V\oplus O_X^{m+1})$ denote its universal quotient line bundle and $\tau$ its first Chern class.

 The composition of the bundle maps $\pi^*_{m+1}V\irarrow \pi^*_{m+1}(V\oplus O_X^{m+1}) \surj \calQ$ yields a section $s_{m+1}:\PP^*(V\oplus O_X^{m+1})\rightarrow \pi_{m+1}^*V^\vee\otimes \calQ$. Since $m+1\geq 0$, we can identify the zero locus of this section: $Z(s_{m+1})\simeq \PP^*(O_X^{m+1})\simeq(\PP^m)^*\times_{\Spec(\bfk)} X$. Moreover, as its codimension in $\PP^*(V\oplus O_X^{m+1})$ is $n$, its fundamental class is given by the top Chern class of $\pi_{m+1}^*V^\vee\otimes \calQ$. Hence, together with (\ref{defwtilde2}),  we obtain 
\begin{eqnarray*}
[Z(s_{m+1}) \to \PP^*(V\oplus O_X^{m+1})]&=&
c_n(\pi_{m+1}^*V^\vee\otimes \calQ)
=\sum_{k=0}^{\infty}\sum_{i=0}^n (-1)^{n-i}c_{n-i}(V)w_{i-k}(V)\tau^k.
\end{eqnarray*}
Now we push-forward this equality to $\Omega^*(X)$ and get
\begin{eqnarray*}[\PP^m]\cdot 1_X
&=&\sum_{k=0}^{\infty}\sum_{i=0}^n (-1)^{n-i}c_{n-i}(V)w_{i-k}(V)\pi_{m*}(\tau^k) \\
&=&\sum_{k=0}^{\infty}\sum_{i=0}^n (-1)^{n-i}c_{n-i}(V)w_{i-k}(V)\scS_{k-n-m}(V),
\end{eqnarray*}
which is precisely the desired equality.

Let us now focus on the equalities in degree $m$, with $m$ strictly positive. To do this we consider the projective bundle $\pi: \PP^*(V) \to X$ and the following short exact sequence of vector bundles:
\[
0\rightarrow \calQ^\vee\rightarrow \pi^* V^\vee\rightarrow H^\vee\rightarrow 0
\]
over $\PP^*(V)$. By twisting it by $\calQ$, we see that the first term is trivial and as a consequence we get  $c_n(\pi^*V^\vee\otimes \calQ)=0$. As in the previous part, we expand the left hand side by means of the Chern polynomials and of the power series $w(V,u)$ . Hence we obtain
\begin{eqnarray*}
c_n(\pi^*V^\vee\otimes \calQ)&=&\sum_{k=0}^{\infty}\sum_{i=0}^n (-1)^{n-i}c_{n-i}(V)w_{i-k}(V)\tau^k,
\end{eqnarray*}
where we set $\tau:=c_1(\calQ)$. It now suffices to multiply both sides by $\tau^m$ and push them forward to $\Omega^*(X)$ to obtain the desired equality in degree $m$.
\end{proof}
\begin{exm}
For connective $K$-theory $\CK^*$, we have $\scP(u)=\frac{1}{1-\beta u^{-1}}$ and $w(V;u)=\frac{1}{c(V;-\beta)}$. Thus  Theorem \ref{thmSegre} gives 
\[
\scS(V;u) =\frac{1}{1-\beta u^{-1}} \frac{c(V;-\beta)}{c(V;-u)},
\]
which was obtained in \cite{HIMN}. Note that the sign convention for $\beta$ is opposite from the one in \cite{HIMN}.
\end{exm}
\subsection{Relative Segre classes}
Let $0\to V' \to V \to V'' \to 0$ be a short exact sequence of vector bundles. From Definition \ref{defwtilde1}, we can observe that
\[
w(V;u) = w(V'; u) w(V'';u).
\]
This allows us to define the following.
\begin{defn}\label{defRelSeg}
Let $V$ and $W$ be arbitrary vector bundles over $X$. For each $s\geq 0$, we define the class $w_{-s}(V-W)$ in $\Omega^{-s}(X)$ by
\begin{equation*} 
w(V-W;u) :=\sum_{s = 0}^{\infty} w_{-s}(V-W) u^{-s} := \frac{w(V;u)}{w(W;u)}.
\end{equation*}
For each $k\in \ZZ$, we define the relative Segre class $\scS_k(V-W)$ in $\Omega^k(X)$ by
\begin{equation}\label{relSeg}
\scS(V-W;u) := \sum_{k\in \ZZ} \scS_k(V-W)u^k:= \frac{\scP(u)}{c(V-W;-u)w(V-W;u)},
\end{equation}
or equivalently,
\begin{equation*}
\scS_k(V-W) := \sum_{q=0}^{\rk(W)}\sum_{j = 0}^{\infty} (-1)^qc_q(W)w_{-j}(W)\scS_{k-q+j}(V) .
\end{equation*}
Both the classes $w_{-s}(V-W)$ and $\scS_k(V-W)$ are well-defined if $[V-W]$ is viewed as an element of the Grothendieck group of vector bundles over $X$.
\end{defn}

The following description generalises Proposition 2.11 of \cite{HIMN} to algebraic cobordism.
\begin{thm}\label{relSegrePush}
Let $V$ and $W$ be vector bundles over $X$ of rank $n$ and $m$ respectively. Let $\pi: \PP^*(V) \to X$ be the dual projective bundle, $\calQ$ its tautological quotient line bundle, and $\tau:=c_1(\calQ)$. We have
\[
\pi_*(\tau^s c_m(\calQ \otimes W^{\vee}))  = \scS_{m-n+1+s}(V-W).
\]
\end{thm}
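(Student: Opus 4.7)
The plan is to reduce $c_f(\calQ \otimes F^\vee)$ to a product of a Chern polynomial factor and a $w$-factor via Lemma \ref{lemP} and Definition \ref{defwtilde1}, expand in powers of $\tau$, push forward termwise, and match the result coefficient-wise against Definition \ref{defRelSeg}. There is no substantive geometric obstacle: the content of the proof is the recognition that $\prod_q P(\tau, y_q)$ produces precisely the factor $w(F;u)$ that distinguishes the relative Segre series from the ordinary one.

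First I would invoke the splitting principle. Writing $y_1,\dots,y_f$ for the Chern roots of $F$ and using that $\calQ$ has rank one with $c_1(\calQ)=\tau$, one has
$$c_f(\calQ \otimes F^\vee) = \prod_{q=1}^f F\bigl(\tau,\chi(y_q)\bigr).$$
Applying Lemma \ref{lemP} factors each term as $F(\tau,\chi(y_q)) = (\tau - y_q)\,P(\tau,y_q)$. The polynomial part gives
$$\prod_{q=1}^f(\tau-y_q) = \sum_{p=0}^f (-1)^p c_p(F)\,\tau^{f-p} = \tau^f\, c(F;-\tau^{-1}),$$
and by Definition \ref{defwtilde1}, with $z=\tau$ and the formal variables specialized to the Chern roots of $F$,
$$\prod_{q=1}^f P(\tau, y_q) = \sum_{k \geq 0} w_{-k}(F)\,\tau^k = w(F;\tau^{-1}).$$
Multiplying by $\tau^s$ therefore yields
$$\tau^s\, c_f(\calQ \otimes F^\vee) = \sum_{p=0}^f \sum_{k\geq 0} (-1)^p c_p(F)\, w_{-k}(F)\, \tau^{s+f-p+k}.$$

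Next I would push forward termwise. By Definition \ref{defSegre} (taking $n=0$), $\pi_*(\tau^K)=\scS_{K-e+1}(E)$ for every $K\geq 0$, with both sides vanishing for $0\leq K<e-1$ by the usual degree-grading argument. Thus
$$\pi_*\bigl(\tau^s\, c_f(\calQ \otimes F^\vee)\bigr) = \sum_{p=0}^f\sum_{k\geq 0} (-1)^p c_p(F)\, w_{-k}(F)\, \scS_{(f-e+1+s)+k-p}(E).$$

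Finally I would expand the target according to Definition \ref{defRelSeg}:
$$\scS(E-F;u) = \sum_{m}\scS_m(E)\,u^m \cdot \sum_{p=0}^f(-1)^p c_p(F)\,u^p \cdot \sum_{k\geq 0}w_{-k}(F)\,u^{-k}.$$
Extracting the coefficient of $u^{f-e+1+s}$ amounts to imposing $m = (f-e+1+s)+k-p$, which reproduces exactly the triple sum computed in the previous step. Hence both sides equal $\scS_{f-e+1+s}(E-F)$, as claimed. The only non-formal point is that $w(F;\tau^{-1})$ defines a bona fide element of $\Omega^*(\PP^*(E))$; this is automatic because $w_{-k}(F)\in\Omega^k(X)$ and Chern classes on the smooth quasi-projective base are nilpotent, so only finitely many $k$ contribute in each cohomological degree.
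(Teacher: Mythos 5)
Your proof is correct and follows essentially the same route as the paper's: expand $\tau^s c_f(\calQ\otimes F^\vee)$ via Definition \ref{defwtilde1} (your splitting-principle derivation of the product $\prod_q(\tau-y_q)P(\tau,y_q)$ is exactly the content of equation (\ref{defwtilde2})), push forward termwise using $\pi_*(\tau^K)=\scS_{K-e+1}(E)$, and match coefficients against Definition \ref{defRelSeg}. The only quibble is cosmetic and does not affect the argument: $w_{-k}(F)$ lives in $\Omega^{-k}(X)$ rather than $\Omega^{k}(X)$, and the finiteness of the sums comes from the nilpotence of $\tau$ and of the Chern roots of $F$.
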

\begin{proof}
In view of (\ref{defwtilde2}) one gets
\begin{eqnarray*}
\tau^s c_m(\calQ\otimes W^{\vee}) 
&=& \sum_{q=0}^m\sum_{j = 0}^{\infty}(-1)^qc_q(W) w_{-j}(W)\tau^{j+m-q+s}.
\end{eqnarray*}
Thus, by the definition of $\scS_k(V)$, we have
\begin{eqnarray*}
\pi_*(\tau^s c_m(\calQ \otimes W^{\vee})) 
&=&\sum_{q=0}^m \sum_{j=0}^{\infty}  (-1)^{q}c_{q}(W) w_{-j}(W) \scS_{m-n+1+s-q+j}(E),
\end{eqnarray*}
the right hand side of which is $\scS_{m-n+1+s}(W-V)$ by (\ref{relSeg}).
\end{proof}
\begin{rem}\label{remLINE}
If $V$ is a line bundle, then $\pi = \id_X$ as mentioned above. In this case, we have $\tau^s c_m(\calQ \otimes W^{\vee})  = \scS_{m-n+1+s}(V-W)$.
\end{rem}
\section{Grassmannian degeneracy loci and Damon--Kempf--Laksov classes}\label{secKLdet}
Let $E$ be a vector bundle of rank $e$ over a smooth quasi-projective variety $X$. Let $\Gr_d(E) \to X$ be the Grassmann bundle over $X$ consisting of pairs $(x,U_x)$ where $x \in X$ and $U_x$ is a $d$-dimensional subspace of $E_x$, the fibre of $E$ at $x$. Let $U$ be the tautological bundle of $\Gr_d(E)$. Fix a complete flag $0=F^e\subset \cdots \subset F^1\subset F^0= E$ where $\rk\ F^k = e-k$. We set $F_k:=E/F^k$. In the rest of the paper we will suppress from the notation the pullback of bundles.

 A partition $\lambda$ with at most $d$ parts is a weakly decreasing sequence $(\lambda_1,\dots,\lambda_d)$ of nonnegative integers. The length of $\lambda \in \calP_d$ is the number of nonzero parts, where $\calP_d$ is the set of all partitions with at most $d$ parts.  Let $\calP_d(e)$ be the set of all partitions $\lambda$ in $\calP_d$ such that $\lambda_1\leq e-d$. For each $\lambda \in \calP_d(e)$ of length $r$, we define the degeneracy locus $X_{\lambda}$ in $\Gr_d(E)$ by
\[
X_{\lambda}:=\left\{(x,U_x) \in \Gr_d(E) \ |\ \dim (F^{\lambda_i-i+d}_x \cap U_x) \geq i, i=1,\dots, r\right\}.
\]

Consider the $r$-step flag bundle $\Fl_r(U)$ of $U$ over $\Gr_d(U)$, whose fiber at $(x,U_x)$ is a flag of subspaces $(D_1)_x\subset \cdots \subset (D_r)_x \subset U_x$ with $\dim (D_i)_x=i$. Let $D_1\subset \cdots \subset D_r$ be the tautological bundles of $\Fl_r(U)$ and set $D_0=0$. The flag bundle $\Fl_r(U)$ can be realised as the following tower of projective bundles
\begin{eqnarray}
&&\pi: \Fl_r(U)=\PP(U/D_{r-1}) \stackrel{\pi_r}{\longrightarrow} 
\PP(U/D_{r-2}) \stackrel{\pi_{r-1}}{\longrightarrow} \cdots \ \ \ \ \ \ \ \ \ \nonumber\\\label{tower}
&&\ \ \ \ \ \ \ \ \ \ \ \ \ \ \ \ \ \ \ \ \ \ \cdots\stackrel{\pi_3}{\longrightarrow} \PP(U/D_1) \stackrel{\pi_2}{\longrightarrow} \PP(U)  \stackrel{\pi_1}{\longrightarrow} \Gr_d(E).
\end{eqnarray}
We regard $D_i/D_{i-1}$ as the tautological line bundle of $\PP(U/D_{i-1})$. Denote $\tau_i:=c_1((D_i/D_{i-1})^{\vee})$.
\begin{defn}\label{defKL}
For each $\lambda \in \calP_d(n)$ of length $r$, define a subvariety $Y_{\lambda} \subset \Fl_r(U)$ by
\[
Y_{\lambda} := \left\{ \big(x,U_x, (D_{\bullet})_x\big) \in  \Fl_r(U) \ |\ (D_i)_x \subset F^{\lambda_i+d-i}_x, i=1,\dots,r\right\}.
\]
The cobordism class $[Y_\lambda \to \Gr_d(E)]$ of $Y_{\lambda}$ in $\Gr_d(E)$ is thus defined as the pushforward of the fundamental class of $Y_{\lambda}$ in $\Omega^*(\Fl_r(U))$ along $\pi$, \textit{i.e.}
\[
[Y_\lambda \to \Gr_d(E)] :=\pi_*[Y_\lambda \to \Fl_r(U)].
\]
\end{defn}

\begin{rem}
It is well-known that $Y_\lambda$ is smooth and birational to $X_{\lambda}$ along $\pi$. Since $X_\lambda$ has at worst rational singularities it follows that the specialisation of the class $[Y_\lambda \to \Gr_d(E)]$ to $\CK^*(\Gr_d(E))$ coincides with the fundamental class $[X_{\lambda}]_{\CK}$ of $X_{\lambda}$ (cf. \cite{HIMN}).
\end{rem}

We can express the class of $Y_{\lambda}$ in $\Omega^*(\Fl_r(U))$ as follows.
\begin{prop}\label{propKL}
In $\Omega^*(\Fl_r(U))$, we have
\begin{equation}\label{Y_r prod}
[Y_\lambda \to \Fl_r(U)]=\prod_{i=1}^r c_{\lambda_j+d-j}\Big((D_j/D_{j-1})^{\vee}\otimes F_{\lambda_j+d-j}\Big).
\end{equation}
\end{prop}
This is the exact analogue for $\Omega^*$ of \cite[Lemma 3.3]{HIMN} and the proof, which we omit, can be easily obtained by making use of the following lemma.
\begin{lem}[(\protect{\cite[Lemma 6.6.7]{LevineMorel},\cite[Example 14.1.1]{FultonIntersection}})]\label{gaussbonnet}
Let $V$ be a vector bundle of rank $n$ over $X$ and $s$ a section of $V$. Let $Z$ be the zero scheme of $s$. If $X$ is Cohen-Macaulay and the codimension of $Z$ in $X$ is $n$, then $s$ is regular and 
\[
c_m(V)=[Z\to X] \in \Omega^n(X).
\]
\end{lem}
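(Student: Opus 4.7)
The strategy is to reduce the lemma to the self-intersection formula for the zero section of $V$. The two conclusions, namely the regularity of $s$ and the identification with the top Chern class, will be handled in turn (reading the cited $c_d(V)$ as the top Chern class, which forces $e=m$).

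First I would verify that $s$ is a regular section. After trivializing $V \cong \calO_X^m$ in a Zariski neighborhood, write $s=(s_1,\dots,s_m)$, so that the ideal of $Z$ is locally generated by the components $s_i$. The hypothesis $\codim Z=e$, which for a section of a rank-$m$ bundle forces $e=m$ on each component where $Z$ is nonempty, combined with $X$ being Cohen--Macaulay, lets one invoke the unmixedness theorem: an ideal in a Cohen--Macaulay local ring generated by $m$ elements and cutting out a subscheme of codimension $m$ is generated by a regular sequence. Hence the Koszul complex on $s$ resolves $\calO_Z$, which is precisely what it means for $s$ to be a regular section and for $Z\hookrightarrow X$ to be a regular closed embedding of codimension $m$.

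Next I would identify the class $[Z\hookrightarrow X]$ via a fibre square with the zero section. Let $p\colon\calV\to X$ denote the total space of $V$ and $z\colon X\to\calV$ its zero section. Since $p\circ s=\id_X=p\circ z$, the zero scheme $Z$ is the fibre product $X\times_{\calV}X$ taken with respect to $s$ and $z$. Regularity of $s$ makes this square Tor-independent against the regular embedding $z$ of codimension $m$, and the base-change compatibility built into $\Omega^*$ yields $[Z\hookrightarrow X]=s^*\bigl(z_*(1_X)\bigr)$ in $\Omega^m(X)$.

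The last step is to evaluate $z_*(1_X)$. By the extended homotopy property, $p^*\colon\Omega^*(X)\to\Omega^*(\calV)$ is an isomorphism, so $z_*(1_X)=p^*\alpha$ for a unique $\alpha\in\Omega^m(X)$. Since both $z$ and $s$ split $p$, one obtains $s^*\bigl(z_*(1_X)\bigr)=\alpha=z^*\bigl(z_*(1_X)\bigr)$. The right-hand side is the standard self-intersection of the zero section, which equals the top Chern class $c_m(V)$ by the computation via the projective completion $\PP(V\oplus\calO)\to X$ and the projective bundle formula (equivalently, by deformation to the normal bundle). Assembling the chain gives $c_m(V)=[Z]$ in $\Omega^m(X)$. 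The main obstacle is securing the identity $[Z]=s^*\bigl(z_*(1_X)\bigr)$: without regularity of $s$ the natural base-change square need not be Tor-independent, and the axioms of $\Omega^*$ provide a compatibility between pushforward and pullback only in that situation. All the substantive content is therefore hidden in the Cohen--Macaulay step, with the rest being formal manipulation within an oriented cohomology theory.
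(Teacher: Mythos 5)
The paper offers no proof of this lemma at all---it is imported, citation only, from Levine--Morel (Lemma 6.6.7) and Fulton (Example 14.1.1), so there is no internal argument to measure yours against; I can only assess your proof on its own terms. Your first step is correct: in a Cohen--Macaulay local ring, $m$ elements generating an ideal of height $m$ form a regular sequence, so $s$ is regular, $Z\hookrightarrow X$ is a regular (hence l.c.i.) embedding, and the Koszul resolution shows that the square formed by $s$ and the zero section $z$ is Tor-independent. Your reading of the (garbled) statement, with $e=m$ and $c_d$ meaning the top Chern class, is also the intended one. The identities $s^*=z^*=(p^*)^{-1}$ and the self-intersection formula $z^*z_*(1_X)=c_m(V)$ are standard consequences of the extended homotopy property and the projective bundle formula.

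The gap sits exactly where you locate ``the main obstacle,'' but Tor-independence does not close it. The base-change axiom of an oriented cohomology theory on $\SM$ applies only to squares that are transverse in Levine--Morel's sense, and their definition of transversality in $\SM$ requires, besides the vanishing of higher Tor's, that the fibre product $X\times_{\calV}X=Z$ itself be \emph{smooth}; otherwise $\Omega^*(Z)$ is not defined and $[Z\hookrightarrow X]$ must be interpreted through the Borel--Moore theory $\Omega_*$ and its l.c.i.\ pullbacks. For a Cohen--Macaulay but singular $Z$, the identity $[Z\hookrightarrow X]=s^*\bigl(z_*(1_X)\bigr)$ is therefore not a formal consequence of the axioms: establishing it is the content of the refined Gysin/deformation-to-the-normal-cone machinery of Levine--Morel's Chapter 6, i.e.\ of the very Lemma 6.6.7 being cited. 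Your argument is complete in the case where $Z$ is smooth---which does cover the only application made in this paper, Proposition \ref{propKL}, where the zero locus is $\widetilde{X}^{KL}_\lambda$ and is smooth---but as a proof of the lemma as stated it assumes the hard part. A smaller point to watch: if you prove the self-intersection formula by realizing the zero section of $\PP(V\oplus\calO)$ as the vanishing locus of a section of $\calO(1)\otimes q^*V$, you are invoking an instance of the statement under proof; the transverse smooth case should be settled first, e.g.\ by the splitting principle together with the definition $c_1(L)=z^*z_*(1_X)$.
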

We will compute the class $[Y_\lambda \to \Gr_d(E)]$ by pushing forward the product of Chern classes (\ref{Y_r prod}) through the tower of projective bundles (\ref{tower}). First we need some algebraic preparations, following \cite{HIMN}. 
Set $R=\Omega^*(\Gr_d(E))$, viewed as a graded algebra over $\LL$. Let $t_1,\ldots,t_{r}$ be indeterminates of degree $1$. We use the multi-index notation $t^\bfs:=t_1^{s_1}\cdots t_{r}^{s_{r}}$ for $\bfs=(s_1,\dots,s_{r})\in \ZZ^{r}$. A formal Laurent series $f(t_1,\ldots,t_{r})=\sum_{\bfs\in\ZZ^{r}}a_{\bfs}t^{\bfs}$ is {\em homogeneous of degree} $m\in \ZZ$ if $a_{\bfs}$ is zero unless $a_{\bfs}\in R_{m-|\bfs|}$ with $|\bfs|=\sum_{i=1}^{r} s_i$. Let $\supp\, f = \{\bfs \in \ZZ^r \ |\ a_{\bfs}\not=0\}$.
For each $m \in \ZZ$, define $\calL^{R}_m$ to be the space of all formal Laurent series of homogeneous degree $m$ such that there exists $\sfn\in \ZZ^r$ such that $\sfn + \supp\, f$ is contained in the cone in $\ZZ^r$ defined by $s_1\geq0, \; s_1+s_2\geq 0, \;\cdots, \; s_1+\cdots + s_{r} \geq 0$. Then $\calL^{R}:=\bigoplus_{m\in \ZZ} \calL^{R}_m$ is a graded ring over $R$ with the obvious product. 
For each $i=1,\dots, r$, let $\calL^{R,i}$ be the $R$-subring of $\calL^R$ consisting of series that do not contain any negative powers of $ t_1,\dots, t_{i-1}$.  In particular, $\calL^{R,1}=\calL^{R}$. 
A series $f(t_1,\ldots,t_{r})$ is a {\em power series} if it doesn't contain any negative powers of $t_1,\dots,t_r$. Let $R[[t_1,\ldots,t_r]]_{m}$ denote the set of all power series in $t_1,\dots, t_r$ of degree $m\in \ZZ$. We set $R[[t_1,\ldots,t_r]]_{\gr}:=\bigoplus_{m\in \ZZ}R[[t_1,\ldots,t_r]]_{m}$.
\begin{defn}
For each $j=1,\dots, r$, define a graded $R[[ t_1,\dots, t_{j-1}]]_{\gr}$-module homomorphism
\[
\phi_j: \calL^{R,j} \to \Omega^*\big(\PP(U/D_{j-2})\big)
\]
by setting 
\[
\phi_j( t_1^{s_1}\cdots  t_{r}^{s_d})= \tau_1^{s_1}\cdots  \tau_{j-1}^{s_{j-1}}\scS_{s_j}(j) \cdots \scS_{s_r}(r)
\]
where $\scS_m(i) := \scS_m((U- F_{\lambda_i-i+d})^{\vee})$ for $m\in \ZZ$ and $i=1,\dots,r$. It is known that $\Omega^*(\PP(U/D_{j-2}))$ is bounded above, \textit{i.e.}, $\Omega^m(\PP(U/D_{j-2})) = 0$ for all $m > \dim \PP(U/D_{j-2})$. Therefore $\scS_m(i)$ is zero for all sufficiently large $m$. This ensures that the above map is well-defined.
\end{defn}
We have the following pushforward formula for each stage of the tower in terms of $\phi_j$.
\begin{lem}\label{pphi}
Let $\alpha_j:=c_{\lambda_j+d-j}\big((D_j/D_{j-1})^{\vee}\otimes F_{\lambda_j+d-j}\big)$ for $j=1,\dots, r$. For each non-negative integer $s$, we have
\[
\pi_{j*}\big(\tau_j^s\alpha_j\big)=\phi_j\left(t_j^{\lambda_j+s}\prod_{i=1}^{j-1}(1 - t_i/t_j) P(t_j, t_i) \right).
\]
\end{lem}
\begin{proof}
 By applying Theorem \ref{relSegrePush} to $\pi_j: \PP(U/D_{j-1}) \to \PP(U/D_{j-2})$ the left hand side can be evaluated as 
\[
\pi_{j*}(\tau_j^s\alpha_j)=\scS_{\lambda_j+s}\big((U/D_{j-1} - F_{\lambda_j+d-j})^{\vee}\big)=\scS_{\lambda_j+s}\big((U-F_{\lambda_j-j+d})^{\vee}-D_{j-1}^{\vee}\big).
\]
From the definition of the relative Segre class (\ref{relSeg}), we obtain
\begin{eqnarray*} 
\pi_{j*}\big(\tau_j^s\alpha_j\big)&=& \sum_{p=0}^{j-1}\sum_{q=0}^{\infty}(-1)^pc_p(D_{j-1}^{\vee}) w_{-q}(D_{j-1}^{\vee})\scS_{\lambda_j+s-p+q}(j).
\end{eqnarray*}
Thus by using $\phi_j$, we have
\begin{eqnarray*}
\pi_{j*}\big(\tau_j^s\alpha_j\big)
&=&\phi_j\left(\sum_{p=0}^{j-1}\sum_{q=0}^{\infty}(-1)^pe_p(t_1,\dots,t_{j-1}) w_{-q}(t_1,\dots,t_{j-1})  t_j^{\lambda_j+s-p+q}\right)\\
&=&\phi_j\left( t_j^{\lambda_j+s}\left(\sum_{p=0}^{j-1}(-1)^pe_p(t_1,\dots,t_{j-1})t_j^{-p} \right)\left(\sum_{q=0}^{\infty}w_{-q}(t_1,\dots,t_{j-1})t_j^q\right) \right).
\end{eqnarray*}
The claim follows from the definitions of $w_{-q}$ and of the elementary symmetric polynomials $e_p$ in terms of their generating functions.
\end{proof}
Now we obtain our main application. Let
\[
\Delta_{\bfm}\Big(\scS(1), \dots, \scS(r)\Big) := \det\Big(\scS_{m_i+j-i}(i)\Big)_{1\leq i,j\leq r}
\]
for each $\bfm=(m_1,\dots,m_r)\in \NN^r$. Let $a_{\bfs}\in \LL$ be the coefficients of the power series
\begin{equation*}
\prod_{1\leq i<j\leq r}P(t_j,t_i)=\sum_{\bfs=(s_1,\dots, s_r) \in\NN^r}a_{\bfs}\cdot t_1^{s_1}\cdots t_r^{s_r}
\end{equation*}
as an element of $\calL^{\LL}$. 
\begin{thm}\label{detthm}
For a partition $\lambda \in \calP_d(n)$ of length $r$, the class $[Y_\lambda \to \Gr_d(E)]$ is given by
\[
[Y_\lambda \to \Gr_d(E)]=\sum_{\bfs=(s_1,\dots, s_r) \in\NN^r}a_{\bfs} \Delta_{\lambda+\bfs}\Big(\scS(1), \dots, \scS (r)\Big).
\]
\end{thm}
\begin{proof}
By Definition \ref{defKL} and Proposition \ref{propKL}, we have
\[
[Y_\lambda \to \Gr_d(E)] = \pi_{1*} \circ \cdots \circ \pi_{r*}\left(\prod_{j=1}^r \alpha_i\right)
\]
 and a repeated application of Lemma \ref{pphi} (\textit{cf.} \cite[Section 4.4]{HIMN}) yields
\[
[Y_\lambda \to \Gr_d(E)] = \phi_1\left(t_1^{\lambda_1}\cdots t_r^{\lambda_r} \prod_{1\leq i<j\leq r} (1-t_i/t_j)\prod_{1\leq i<j\leq r} P(t_j,t_i)  \right).
\]
Since $\phi_1$ is linear, we have
\[
[Y_\lambda \to \Gr_d(E)] = \sum_{\bfs=(s_1,\dots, s_r) \in\NN^r}a_{\bfs}
\phi_1\left(t_1^{\lambda_1+s_1}\cdots t_r^{\lambda_r+s_r} \prod_{1\leq i<j\leq r} (1-t_i/t_j)\right).
\]
Vandermode's determinant formula allows us to write
\[
t_1^{\lambda_1+s_1}\cdots t_r^{\lambda_r+s_r} \prod_{1\leq i<j\leq r} (1-t_i/t_j) = \det\left(t_i^{\lambda_i+s_i+j-i}\right),
\]
thus by applying $\phi_1$ we obtain
\[
[Y_\lambda \to \Gr_d(E)]=\sum_{\bfs=(s_1,\dots, s_r) \in\NN^r}a_{\bfs} \det\Big(\scS_{\lambda_i+s_i+j-i}(i)\Big)_{1\leq i,j\leq r}.
\]
This completes the proof.
\end{proof}
In connective $K$-theory $[Y_\lambda \to \Gr_d(E)]_{CK}$ coincides with the fundamental class of the degeneracy locus $X_{\lambda}$ and thus Theorem \ref{detthm} implies the following determinantal formula describing $[X_{\lambda}]_{CK}$, which is different from the one obtained in \cite{HIMN}.
\begin{cor}\label{exmCK}
For a partition $\lambda\in \calP_d(n)$, we have
\begin{eqnarray*}
[X_{\lambda}]_{\CK}
&=&\det\left(\sum_{s\geq 0}  \binom{i-r}{s}(-\beta)^s \scS_{\lambda_i+j-i+s}([i])\right)_{1\leq i,j\leq r}.
\end{eqnarray*}
\end{cor}
\begin{proof}
For connective $K$-theory one has $P_{CK}(x,y)=\frac{1}{1-\beta y}$, so in this case the formula follows from the identity
\begin{equation*}
t_1^{\lambda_1}\cdots t_r^{\lambda_r} \prod_{1\leq i<j\leq r} (1-t_i/t_j)\prod_{1\leq i<j\leq r} P(t_j,t_i) 
=\det\left(\left(\frac{1}{1-\beta t_i}\right)^{r-i}  t_i^{\lambda_i+j-i}\right)_{1\leq i,j\leq r}.\qedhere
\end{equation*}
\end{proof}

\textit{Acknowlegdements:} Both authors would like to thank Takeshi Ikeda for useful discussions and Marc Levine for his valuable comments which greatly improved the readability.
\bibliographystyle{ieeetr}
\bibliography{references}{}

\def\cprime{$'$}
\begin{thebibliography}{10}

\bibitem{Damon1973}
J.~Damon, ``The {G}ysin homomorphism for flag bundles,'' {\em Amer. J. Math.},
  vol.~95, pp.~643--659, 1973.

\bibitem{KempfLaksov}
G.~Kempf and D.~Laksov, ``The determinantal formula of {S}chubert calculus,''
  {\em Acta Math.}, vol.~132, pp.~153--162, 1974.

\bibitem{FlagsFulton}
W.~Fulton, ``Flags, {S}chubert polynomials, degeneracy loci, and determinantal
  formulas,'' {\em Duke Math. J.}, vol.~65, no.~3, pp.~381--420, 1992.

\bibitem{LevineMorel}
M.~Levine and F.~Morel, {\em Algebraic cobordism}.
\newblock Springer Monographs in Mathematics, Springer, Berlin, 2007.

\bibitem{HudsonMatsumuraInf}
T.~{Hudson} and T.~{Matsumura}, ``{Kempf-Laksov Schubert classes for even
  infinitesimal cohomology theories},''
\newblock To appear in Proceedings of IMPANGA15 conference.

\bibitem{BraidBressler}
P.~Bressler and S.~Evens, ``The {S}chubert calculus, braid relations, and
  generalized cohomology,'' {\em Trans. Amer. Math. Soc.}, vol.~317, no.~2,
  pp.~799--811, 1990.

\bibitem{SchubertBressler}
P.~Bressler and S.~Evens, ``Schubert calculus in complex cobordism,'' {\em
  Trans. Amer. Math. Soc.}, vol.~331, no.~2, pp.~799--813, 1992.

\bibitem{SchubertCalmes}
B.~Calm{\`e}s, V.~Petrov, and K.~Zainoulline, ``Invariants, torsion indices and
  oriented cohomology of complete flags,'' {\em Ann. Sci. \'Ec. Norm. Sup\'er.
  (4)}, vol.~46, no.~3, pp.~405--448 (2013), 2013.

\bibitem{SchubertHornbostel}
J.~Hornbostel and V.~Kiritchenko, ``Schubert calculus for algebraic
  cobordism,'' {\em J. Reine Angew. Math.}, vol.~656, pp.~59--85, 2011.

\bibitem{EquivariantKiritchenko}
V.~Kiritchenko and A.~Krishna, ``Equivariant cobordism of flag varieties and of
  symmetric varieties,'' {\em Transform. Groups}, vol.~18, no.~2, pp.~391--413,
  2013.

\bibitem{EquivariantCalmes}
B.~Calm{\`e}s, K.~Zainoulline, and C.~Zhong, ``Equivariant oriented cohomology
  of flag varieties,'' {\em Doc. Math.}, no.~Extra vol.: Alexander S.
  Merkurjev's sixtieth birthday, pp.~113--144, 2015.

\bibitem{ThomHudson}
T.~Hudson, ``A {T}hom-{P}orteous formula for connective {$K$}-theory using
  algebraic cobordism,'' {\em Journal of K-theory: K-theory and its
  Applications to Algebra, Geometry, and Topology}, vol.~14, pp.~343--369, 10
  2014.

\bibitem{GeneralisedHudson}
T.~{Hudson}, ``{Generalised symplectic Schubert classes},'' {\em ArXiv
  e-prints}, Apr. 2015.

\bibitem{HIMN}
T.~{Hudson}, T.~{Ikeda}, T.~{Matsumura}, and H.~{Naruse}, ``Degeneracy loci
  classes in k-theory — determinantal and pfaffian formula,'' {\em Advances
  in Mathematics}, vol.~320, pp.~115 -- 156, 2017.

\bibitem{Kazarian}
M.~Kazarian, ``On lagrange and symmetric degeneracy loci,'' {\em Isaac Newton
  Institute for Mathematical Sciences Preprint Series}, 2000.

\bibitem{FultonIntersection}
W.~Fulton, {\em Intersection theory}, vol.~2 of {\em Ergebnisse der Mathematik
  und ihrer Grenzgebiete. 3. Folge. A Series of Modern Surveys in Mathematics
  [Results in Mathematics and Related Areas. 3rd Series. A Series of Modern
  Surveys in Mathematics]}.
\newblock Berlin: Springer-Verlag, second~ed., 1998.

\bibitem{NakagawaNaruse}
M.~{Nakagawa} and H.~{Naruse}, ``{The universal Gysin formulas for the
  universal Hall-Littlewood functions},'' {\em ArXiv e-prints}, Apr. 2016.

\bibitem{NakagawaNaruse1}
M.~{Nakagawa} and H.~{Naruse}, ``{Generalized (co)homology of the loop spaces
  of classical groups and the universal factorial Schur $P$- and
  $Q$-functions},'' {\em ArXiv e-prints}, Oct. 2013.

\bibitem{LascouxSchutzenberger}
A.~Lascoux and M.-P. Sch{\"u}tzenberger, ``Polyn\^omes de {S}chubert,'' {\em C.
  R. Acad. Sci. Paris S\'er. I Math.}, vol.~294, no.~13, pp.~447--450, 1982.

\bibitem{AndersonFulton2}
D.~{Anderson} and W.~{Fulton}, ``{Chern class formulas for classical-type
  degeneracy loci},'' {\em ArXiv e-prints}, Apr. 2015.

\end{thebibliography}

\begin{small}
{\scshape
\noindent Thomas Hudson, Fachgruppe Mathematik
und Informatik, Bergische Universit\"{a}t Wuppertal, Gaußstrasse 20, 42119 Wuppertal, Germany
}
\end{small}

{\textit{email address}: \tt{hudson@math.uni-wuppertal.de}}

\

\begin{small}
{\scshape
\noindent Tomoo Matsumura, Department of Applied Mathematics, Okayama University of Science, Okayama 700-0005, Japan
}
\end{small}

{\textit{email address}: \tt{matsumur@xmath.ous.ac.jp}}

\end{document}